\newcommand{\h}{\mathcal{H}}
\newcommand{\NN}{\mathbb N}
\newcommand{\RR}{\mathbb R}
\newcommand{\CC}{\mathbb C}
 \newtheorem{thm}{Theorem}[section]
 \newtheorem{lem}[thm]{Lemma}
 \newtheorem{prop}[thm]{Proposition}
 \theoremstyle{definition}
 \newtheorem{defn}[thm]{Definition}
 \theoremstyle{remark}
 \newtheorem*{ex}{Example}
 \numberwithin{equation}{section}
\begin{document}

%
%
%
%
%
%
%
%
%
\title[Construction of Continuous Frames in Hilbert spaces]
 {Construction of Continuous Frames in Hilbert spaces}
\author[A. Rahimi, B. Daraby and Z. Darvishi]{A. Rahimi, B. Daraby and Z. Darvishi }
\address{%
Department of Mathematics\\
University of Maragheh\\
P.O. Box 55181-83111\\
Maragheh\\
Iran} \email{rahimi@maragheh.ac.ir, bdaraby@maragheh.ac.ir,darvishi\_z@ymail.com}

\subjclass[2010]{Primary 42C40; 4210 Secondary 41A58}

\keywords{Frame, Bessel sequence, Continuous frame,  Measure space, Wavelet frame,
Short-time Fourier transform, Gabor frame.}


\begin{abstract}
Extending the concept of frame  to continuous frame, in this
manuscript we will show that under certain conditions on the measure
of $\Omega$ and the dimension of $\h$ we can construct continuous
frames. Also, some examples are given.
\end{abstract}

\maketitle
\section{Introduction}

A discrete frame is a countable family of elements in a separable
Hilbert space which allows stable but not necessarily unique
decomposition of arbitrary elements into expansion of the frame
elements. The concept of generalization of frames was proposed by G.
Kaiser \cite{Gk} and independently by Ali, Antoine and Gazeau
\cite{Ali2} to a family indexed by some locally compact space
endowed with a Radon measure. These frames are known as continuous
frames. Gabardo and Han in \cite{Gb} called these frames
\textit{Frames associated with measurable spaces}, Askari-Hemmat,
Dehghan and Radjabalipour in \cite{Ra} called these frames
\textit{generalized frames}  and in mathematical physics are
referred to \textit{Coherent states}\cite{Ali2}.

 For more studies on continuous frames and its applications, the interested reader can refer to
\cite{Ali1, Ali2, Ra, chui, For, Gb}. In this paper, we focus on
positive measures and separable complex Hilbert spaces.

Wavelet and Gabor frames are used very often in signal processing
algorithms. Both systems are derived from a continuous transform,
which can be seen as a continuous frame \cite{Ali1, fuhr,Gro}.

\section{Continuous frames}
Thought this paper, $\h$ is a separable Hilbert space and $(\Omega
,\mu)$ is a measure space with positive measure $\mu$.
\begin{defn}
Let $\h $ be a complex Hilbert space and $(\Omega ,\mu)$ be a
measure space with positive measure $\mu.$ The mapping
$F:\Omega\to\h$ is called a continuous
frame with respect to $(\Omega ,\mu)$, if\\
\begin{enumerate}
\item  $F$ is weakly-measurable, i.e., for all $f\in \h$, the function
$\omega\to\langle f,F({\omega})\rangle$
is a measurable function on $\Omega$; \\
\item  there exist constants $A, B> 0$ such that
\begin{equation}\label{deframe}
A\|f\|^{2}\leq \int_{\Omega}|\langle
f,F({\omega})\rangle|^{2}\,d\mu(\omega) \leq B\|f\|^{2}, \quad (
f\in \h).
\end{equation}
\end{enumerate}
 The constants $A $ and $B$ are called continuous
frame bounds. $F$ is called a tight continuous frame if $A=B$ and
Parseval if $A=B=1$. The mapping $F$ is called  \emph{Bessel} if the
second inequality in (\ref{deframe}) holds. In this case, $B$ is
called the \emph{Bessel constant.}
\end{defn}

If $\mu$ is  counting measure and $\Omega=\NN$ then $F$ is a
discrete frame. In this sense continuous frames are the more general
setting.

The first inequality in (\ref{deframe}), shows that $F$ is complete,
i.e.,
$$\overline{\textrm{span}}\{F({\omega})\}_{\omega\in\Omega}=\h.$$

Like orthonormal bases we have the following proposition.
\begin{prop}
Let $F:\Omega\rightarrow\h$ be a continuous Bessel function,
$\Lambda\subseteq\Omega$ and $f\in \h$. Then $E_\Lambda:=\{\omega:
\omega\in\Lambda , \langle f, F(\omega)\rangle\neq 0\}$ is
$\sigma$-finite.
\end{prop}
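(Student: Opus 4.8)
The plan is to express $E_\Lambda$ as a countable union of super-level sets of the measurable function $\omega\mapsto|\langle f,F(\omega)\rangle|^{2}$, and then to use the Bessel bound, via a Chebyshev/Markov-type estimate, to show that each of these sets has finite measure. This is the natural continuous analogue of the discrete fact that an orthonormal (or frame) expansion of a fixed $f$ has only countably many nonzero coefficients.

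First I would set $g(\omega):=|\langle f,F(\omega)\rangle|^{2}$. By the weak measurability of $F$ this is a nonnegative measurable function on $\Omega$, and the second inequality in (\ref{deframe}) gives
\begin{equation*}
\int_{\Omega} g\,d\mu \leq B\|f\|^{2} < \infty,
\end{equation*}
so that $g$ is integrable. Next, for each $n\in\NN$ I would put $E_n:=\{\omega\in\Lambda: g(\omega)>1/n\}$. Since $\langle f,F(\omega)\rangle\neq 0$ if and only if $g(\omega)>0$, and $g(\omega)>0$ if and only if $g(\omega)>1/n$ for some $n$, I obtain $E_\Lambda=\bigcup_{n=1}^{\infty} E_n$.

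Finally, I would estimate the measure of each level set by Markov's inequality:
\begin{equation*}
\mu(E_n)\leq n\int_{E_n} g\,d\mu \leq n\int_{\Omega} g\,d\mu \leq nB\|f\|^{2}<\infty.
\end{equation*}
Hence $E_\Lambda$ is a countable union of sets of finite measure, i.e.\ $\sigma$-finite.

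The argument is essentially routine, with the Bessel bound carrying all the weight through the integrability of $g$; no genuine obstacle arises. The only step deserving a word of care is the measurability of $E_\Lambda$ and of each $E_n$, which follows from the weak measurability of $F$ together with the (assumed) measurability of $\Lambda$.
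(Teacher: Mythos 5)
Your proof is correct and follows essentially the same route as the paper's: the paper also writes $E_\Lambda$ as the countable union of the level sets $K_n=\{\omega\in\Lambda:\ |\langle f,F(\omega)\rangle|\geq\tfrac{1}{n}\}$ and bounds $\mu(K_n)$ via the Chebyshev/Markov estimate $\tfrac{1}{n^{2}}\mu(K_n)\leq\int_{\Omega}|\langle f,F(\omega)\rangle|^{2}\,d\mu(\omega)\leq B\|f\|^{2}$. Your phrasing in terms of $g(\omega)=|\langle f,F(\omega)\rangle|^{2}$ and strict super-level sets is only a cosmetic variation of the same argument.
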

\begin{proof} For $n\in \NN$ and $f\in\h$, let
$$K_n=\{\omega: \omega\in \Lambda, |\langle f,
F(\omega)\rangle|\geq\frac{1}{n}\},$$ then $E_\Lambda=\bigcup_n
K_n$. So
\begin{eqnarray*}
\frac{1}{n^2}\mu(K_n)&\leq& \int_{K_n}|\langle
f,F({\omega})\rangle|^{2}\,d\mu(\omega)\\
&\leq&\int_{E_\Lambda}|\langle
f,F({\omega})\rangle|^{2}\,d\mu(\omega)\\
&=&\int_{\Omega}|\langle f,F({\omega})\rangle|^{2}\,d\mu(\omega)\\
&\leq& B\|f\|^{2}.
\end{eqnarray*}

Therefore $\mu(K_n)<\infty$. Hence $E_\Lambda$ is $\sigma$-finite.
\end{proof}
\par

Let $F$ be a continuous frame with respect to $(\Omega ,\mu)$, then
the mapping
$$\Psi : \h\times\h \to \CC$$ defined by
$$\Psi(f,g)= \int_{\Omega}\langle f,F({\omega})\rangle\langle
F({\omega}),g\rangle \,d\mu(\omega), \quad ( f,g\in\h )$$ is well
defined, sesquilinear and bounded. By Cauchy-Schwarz's inequality, we
get
\begin{eqnarray*}|\Psi(f,g)|&\leq&
\int_{\Omega}|\langle f,F(\omega)\rangle\langle
F(\omega),g\rangle|\, d\mu(\omega)\\
&\leq& \left(\int_{\Omega}|\langle f,F(\omega)\rangle|^{2}
\,d\mu(\omega)\right)^{\frac{1}{2}}\left(\int_{\Omega}|\langle
F(\omega), g \rangle|^{2}\, d\mu(\omega)\right)^{\frac{1}{2}}\\
&\leq& B\| f\|\| g\|.
\end{eqnarray*}

Hence $\|\Psi\|\leq B.$ It follows that there exists a unique bounded operator ( Riesz
Representation Theorem) $S_{F}: \h\to\h$ such that
$$\Psi(f,g)=\langle S_{F}f, g\rangle,\quad ( f,g\in\h )$$ and moreover
$\|\Psi\|=\|S_F\|.$\\
Since $\langle S_{F}f,f\rangle=\int_{\Omega}|\langle
f,F(\omega)\rangle |^{2}\,d\mu(\omega),$ then $S_{F}$ is positive
and $AI\leq S_{F}\leq BI$. Hence $S_{F}$ is invertible, positive and
$\frac{1}{B} I\leq S^{-1}_{F}\leq \frac{1}{A} I$. We call $S_{F}$
the continuous frame operator of $F$ and we use the notation
$$S_{F}f=\int_{\Omega}\langle f,F(\omega)\rangle F(\omega)
\,d\mu(\omega)\quad (f\in\h),$$ which is valid in the weak sense.
Thus, every $f\in\h$ has the representations
$$f=S_{F}^{-1}S_{F}f=\int_{\Omega}\langle f, F(\omega)\rangle
S_{F}^{-1}F(\omega)\,d\mu(\omega)$$$$f=S_{F}S_{F}^{-1}f=\int_{\Omega}\langle
f, S_{F}^{-1}F(\omega)\rangle F(\omega)\,d\mu(\omega).$$
\begin{thm}\label{TF}\cite{RaNaDe}
Let $(\Omega, \mu)$ be a measure space and let $F$ be a Bessel
mapping from $\Omega$ to $\h.$ Then the operator
$T_{F}:L^{2}(\Omega, \mu)\to\h$ weakly defined  by $$\langle
T_{F}\varphi, h\rangle=\int_{\Omega}\varphi(\omega)\langle
F(\omega),h \rangle \,d\mu(\omega),\quad ( h\in\h )$$ is well
defined, linear, bounded and its adjoint is given by $$ T_{F}^{*}:
\h\to L^{2}(\Omega, \mu),\quad (T_{F}^{*}h)(\omega)=\langle h,
F(\omega)\rangle,\quad ( \omega\in\Omega ).$$ The operator $T_{F}$
is called the \textit{pre-frame operator or synthesis operator} and
$T_{F}^{*}$ is called the \textit{analysis operator} of $F$.
\end{thm}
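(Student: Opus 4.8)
The plan is to produce $T_{F}\varphi$, for each fixed $\varphi\in L^{2}(\Omega,\mu)$, as the Riesz representative of a suitable bounded functional on $\h$, and then to extract linearity, boundedness, and the adjoint formula directly from the defining weak identity. The single tool used throughout is the Bessel bound: since $F$ is Bessel with constant $B$, the map $h\mapsto\langle h,F(\cdot)\rangle$ sends $\h$ into $L^{2}(\Omega,\mu)$ with
\begin{equation*}
\left(\int_{\Omega}|\langle h,F(\omega)\rangle|^{2}\,d\mu(\omega)\right)^{1/2}\leq\sqrt{B}\,\|h\|.
\end{equation*}

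First I would settle well-definedness. For fixed $\varphi$, consider the functional $\ell(h):=\int_{\Omega}\overline{\varphi(\omega)}\,\langle h,F(\omega)\rangle\,d\mu(\omega)$. The integrand is a product of $\overline{\varphi}\in L^{2}$ and $\langle h,F(\cdot)\rangle\in L^{2}$, hence lies in $L^{1}$, so $\ell(h)$ is finite; moreover Cauchy-Schwarz in $L^{2}(\Omega,\mu)$ combined with the displayed Bessel estimate gives $|\ell(h)|\leq\sqrt{B}\,\|\varphi\|_{L^{2}}\,\|h\|$. Thus $\ell$ is linear and bounded on $\h$, and the Riesz Representation Theorem furnishes a unique vector, which we name $T_{F}\varphi$, satisfying $\ell(h)=\langle h,T_{F}\varphi\rangle$ for all $h$. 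Taking conjugates turns this into $\langle T_{F}\varphi,h\rangle=\int_{\Omega}\varphi(\omega)\langle F(\omega),h\rangle\,d\mu(\omega)$, which is exactly the weak definition in the statement; and the identity $\|T_{F}\varphi\|=\|\ell\|\leq\sqrt{B}\,\|\varphi\|_{L^{2}}$ shows at once that $T_{F}$ is bounded with $\|T_{F}\|\leq\sqrt{B}$.

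Linearity is then immediate from the weak identity: for scalars $\alpha,\beta$ the integral defining $\langle T_{F}(\alpha\varphi+\beta\psi),h\rangle$ splits as $\alpha\langle T_{F}\varphi,h\rangle+\beta\langle T_{F}\psi,h\rangle$ for every $h\in\h$, and non-degeneracy of the inner product yields $T_{F}(\alpha\varphi+\beta\psi)=\alpha T_{F}\varphi+\beta T_{F}\psi$. For the adjoint I would verify the defining relation $\langle T_{F}\varphi,h\rangle_{\h}=\langle\varphi,T_{F}^{*}h\rangle_{L^{2}}$ directly: with the candidate $(T_{F}^{*}h)(\omega)=\langle h,F(\omega)\rangle$, which lies in $L^{2}(\Omega,\mu)$ precisely by the Bessel bound, the right-hand side equals $\int_{\Omega}\varphi(\omega)\overline{\langle h,F(\omega)\rangle}\,d\mu(\omega)=\int_{\Omega}\varphi(\omega)\langle F(\omega),h\rangle\,d\mu(\omega)$, which is the left-hand side by the weak definition. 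Uniqueness of the adjoint then identifies $T_{F}^{*}$ with this formula.

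The only point requiring genuine care---the remainder being bookkeeping---is the well-definedness step: one must confirm that the integral is absolutely convergent before invoking Riesz, and this is exactly where the Bessel hypothesis (together with the weak measurability built into $F$, which guarantees that $\omega\mapsto\langle h,F(\omega)\rangle$ is measurable) is indispensable. Since no Fubini-type interchange is needed anywhere, there are no further measure-theoretic obstacles beyond this single convergence check.
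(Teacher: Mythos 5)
Your proof is correct and complete. Note that the paper itself gives no proof of this theorem --- it is quoted from the reference \cite{RaNaDe} --- so there is no internal argument to compare against; your route (Cauchy--Schwarz together with the Bessel bound to get a bounded linear functional $\ell$, Riesz representation to define $T_{F}\varphi$ with $\|T_{F}\varphi\|=\|\ell\|\leq\sqrt{B}\,\|\varphi\|_{L^{2}}$, and then the adjoint formula read off from the defining weak identity plus uniqueness of the adjoint) is exactly the standard argument used in that reference. You also correctly isolate the one genuine issue, namely absolute convergence of the integral before invoking Riesz, and correctly observe that no $\sigma$-finiteness assumption is needed for this direction, consistent with the paper reserving that hypothesis for the converse (Proposition \ref{COTF}).
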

The converse of Theorem \ref{TF} holds when $\mu$ is a
$\sigma$-finite measure \cite{RaNaDe}.
\begin{prop}\label{COTF}
Let $ (\Omega, \mu)$ be a measure space, where $\mu$ is a
$\sigma$-finite measure and $ F:\Omega\to\h$ be a measurable
function. If the mapping $ T_{F}: L^{2}(\Omega, \mu)\mapsto \h$
defined by
$$\langle T_{F}\varphi, h\rangle=\int_{\Omega}\varphi(\omega)\langle
F(\omega), h\rangle \,d\mu(\omega), \quad( \varphi\in L^{2}(\Omega,
\mu), h\in\h )$$ is a bounded operator then $ F$ is Bessel.
\end{prop}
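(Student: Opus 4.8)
The plan is to exploit the fact that a bounded operator between Hilbert spaces automatically has a bounded adjoint of equal norm, and to show that this adjoint is precisely the assignment $h\mapsto(\omega\mapsto\langle h,F(\omega)\rangle)$; the square-integrability of that function, together with the norm bound $\|T_F^*\|=\|T_F\|$, is exactly the Bessel condition with $B=\|T_F\|^2$.

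First I would record that, since $T_F:L^2(\Omega,\mu)\to\h$ is bounded by hypothesis, its Hilbert-space adjoint $T_F^*:\h\to L^2(\Omega,\mu)$ exists, is bounded, and satisfies $\|T_F^*\|=\|T_F\|$. For a fixed $h\in\h$ write $\phi:=T_F^*h\in L^2(\Omega,\mu)$ and $g_h(\omega):=\langle h,F(\omega)\rangle$, the latter being measurable because $F$ is. The goal is to prove $\phi=g_h$ almost everywhere, for then $g_h\in L^2(\Omega,\mu)$ and
\[
\int_{\Omega}|\langle h,F(\omega)\rangle|^2\,d\mu(\omega)=\|g_h\|_{L^2}^2=\|T_F^*h\|_{L^2}^2\leq\|T_F\|^2\|h\|^2,
\]
which is the desired Bessel inequality.

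The crucial step, and the place where $\sigma$-finiteness enters, is the identification $\phi=g_h$. I would test against characteristic functions of finite-measure sets: for any measurable $E$ with $\mu(E)<\infty$ we have $\chi_E\in L^2(\Omega,\mu)$, so the defining property of the adjoint together with the weak formula for $T_F$ gives
\[
\int_E\overline{\phi}\,d\mu=\langle\chi_E,T_F^*h\rangle=\langle T_F\chi_E,h\rangle=\int_E\langle F(\omega),h\rangle\,d\mu=\int_E\overline{g_h}\,d\mu.
\]
Since this holds for every finite-measure subset, $\phi-g_h$ integrates to zero over every measurable subset of any fixed finite-measure set, forcing $\phi=g_h$ a.e.\ there. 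Writing $\Omega=\bigcup_n\Omega_n$ with $\mu(\Omega_n)<\infty$, which is possible precisely because $\mu$ is $\sigma$-finite, and taking the union then yields $\phi=g_h$ a.e.\ on $\Omega$.

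The main obstacle is this last identification rather than any single estimate: without $\sigma$-finiteness one cannot exhaust $\Omega$ by sets on which $\chi_E$ is a legitimate $L^2$ test function, and the formula $(T_F^*h)(\omega)=\langle h,F(\omega)\rangle$ may genuinely fail. One should also note that the weak definition of $T_F$ presupposes convergence of $\int_E\langle F(\omega),h\rangle\,d\mu$ whenever $\chi_E\in L^2(\Omega,\mu)$, which is what legitimizes the middle equality above. Everything remaining is the routine conversion of an operator-norm bound into a quadratic-form bound.
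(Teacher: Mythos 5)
Your proof is correct. Note that the paper itself gives no argument for this proposition --- it is stated without proof and attributed to \cite{RaNaDe} --- so there is nothing in the text to compare against; your route (identify $T_F^*h$ with $\omega\mapsto\langle h,F(\omega)\rangle$ by testing against characteristic functions of finite-measure sets, then exhaust $\Omega$ by $\sigma$-finiteness) is the standard argument and is essentially how the cited source proceeds. You also correctly flag the two points where care is needed: the a.e.\ identification requires that $\langle F(\cdot),h\rangle$ be integrable on each finite-measure piece, which is guaranteed by the implicit hypothesis that the defining integral for $T_F$ converges for every $\varphi\in L^2(\Omega,\mu)$, and the $\sigma$-finiteness is precisely what lets the local identification propagate to all of $\Omega$, yielding the Bessel bound $B=\|T_F\|^2$.
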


The next theorem gives an equivalent characterization of continuous
frame \cite{RaNaDe}.
\begin{thm}\label{Tonto}
Let $(\Omega, \mu)$ be a measure space where $\mu $ is a
$\sigma$-finite measure. The mapping $F:\Omega\to\h$ is a continuous
frame with respect to $(\Omega, \mu)$ for $\h$ if and only if the
operator $T_{F}$ as defined in Theorem \ref{TF} is a bounded and
onto operator.
\end{thm}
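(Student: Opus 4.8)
The plan is to translate both frame inequalities into statements about the operator $T_F$ and then invoke a single operator-theoretic equivalence: a bounded operator between Hilbert spaces is onto if and only if its adjoint is bounded below. First I would set up the dictionary between $F$ and $T_F$. Using the adjoint formula $(T_F^*h)(\omega)=\langle h,F(\omega)\rangle$ from Theorem \ref{TF}, we have
$$\|T_F^*h\|_{L^2(\Omega,\mu)}^2=\int_\Omega|\langle h,F(\omega)\rangle|^2\,d\mu(\omega),\qquad(h\in\h).$$
Hence the two inequalities in (\ref{deframe}) read exactly $A\|h\|^2\le\|T_F^*h\|^2\le B\|h\|^2$. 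The upper inequality is precisely the boundedness of $T_F^*$ (equivalently of $T_F$), while the lower inequality says precisely that $T_F^*$ is bounded below. With this in hand, the theorem becomes the assertion that $T_F$ is onto if and only if $T_F^*$ is bounded below.

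For the forward implication, suppose $F$ is a continuous frame. In particular $F$ is Bessel, so by Theorem \ref{TF} the operator $T_F$ is well defined and bounded, and its adjoint has the form above. The lower frame bound then reads $\|T_F^*h\|\ge\sqrt{A}\,\|h\|$, i.e. $T_F^*$ is bounded below, and by the operator-theoretic equivalence $T_F$ is onto.

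For the converse, assume $T_F$ is bounded and onto. Since $\mu$ is $\sigma$-finite, boundedness of $T_F$ yields, via Proposition \ref{COTF}, that $F$ is Bessel; therefore the adjoint formula of Theorem \ref{TF} is available and the upper bound $\|T_F^*h\|^2\le\|T_F\|^2\|h\|^2$ holds. Because $T_F$ is onto, the same equivalence produces a constant $A>0$ with $\|T_F^*h\|\ge\sqrt{A}\,\|h\|$, which is exactly the lower frame bound. Thus $F$ satisfies both inequalities in (\ref{deframe}) and is a continuous frame.

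What remains, and what I regard as the heart of the argument, is the equivalence ``$T_F$ onto $\iff$ $T_F^*$ bounded below.'' The easy direction observes that if $T_F$ is onto then $\operatorname{ran}(T_F)=\h$ is closed, so $\ker(T_F^*)=\operatorname{ran}(T_F)^\perp=\{0\}$; a bounded injective operator with closed range (the range of $T_F^*$ is closed by the closed range theorem) is bounded below. The reverse direction is the main obstacle and is where the structure $S_F=T_FT_F^*$ enters: assuming $\|T_F^*h\|\ge\sqrt{A}\,\|h\|$, the frame operator satisfies $\langle S_Fh,h\rangle=\|T_F^*h\|^2\ge A\|h\|^2$, so $S_F$ is positive and bounded below, hence invertible as already noted in the discussion preceding Theorem \ref{TF}. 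Then for every $g\in\h$ we may write $g=S_FS_F^{-1}g=T_F\bigl(T_F^*S_F^{-1}g\bigr)$, exhibiting $g$ in $\operatorname{ran}(T_F)$ and proving $T_F$ onto.
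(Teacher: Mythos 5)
Your proof is correct, but there is nothing in the paper to compare it against: Theorem \ref{Tonto} is stated as a quoted result, with its proof deferred to \cite{RaNaDe}. Judged on its own, your argument is complete and, notably, is assembled entirely from machinery the paper does provide: the adjoint formula of Theorem \ref{TF} turns the two inequalities in (\ref{deframe}) into ``$T_F^*$ is bounded'' and ``$T_F^*$ is bounded below''; Proposition \ref{COTF} is invoked exactly where the $\sigma$-finiteness hypothesis is needed (to pass from mere boundedness of $T_F$ to Besselness, and hence to the adjoint formula, in the converse direction); and the factorization $S_F=T_FT_F^*$ together with $S_F\geq AI$ (hence invertibility of $S_F$, as noted in the discussion preceding Theorem \ref{TF}) yields surjectivity via $g=T_F\bigl(T_F^*S_F^{-1}g\bigr)$. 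Your closed-range-theorem argument for the direction ``$T_F$ onto $\Rightarrow T_F^*$ bounded below'' is sound: $\ker T_F^*=\mathrm{ran}(T_F)^\perp=\{0\}$ and closedness of $\mathrm{ran}(T_F^*)$ together force a lower bound. A slightly more elementary route, standard in the discrete case (see Christensen \cite{C4}), uses the open mapping theorem to produce a bounded right inverse $T_F^\dagger$ of $T_F$ and then estimates
$$\|f\|^2=\langle T_FT_F^\dagger f,f\rangle=\langle T_F^\dagger f,T_F^*f\rangle\leq\|T_F^\dagger\|\,\|f\|\,\|T_F^*f\|,$$
which gives the lower frame bound $A=\|T_F^\dagger\|^{-2}$ directly and avoids the closed range theorem altogether. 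Either way, your proof stands.
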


Such as in the discrete case we have the next lemma.
\begin{lem}\cite{RaNaDe}
Let $F:\Omega\to\h$ be a Bessel function with respect to
$(\Omega,\mu)$. By the above notations $S_{F}=T_{F}T_{F}^{*}.$
\end{lem}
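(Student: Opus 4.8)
The plan is to prove the operator identity $S_F = T_F T_F^*$ by verifying it weakly, that is, by showing $\langle S_F f, g\rangle = \langle T_F T_F^* f, g\rangle$ for every pair $f, g \in \h$. Since both $S_F$ and $T_F T_F^*$ are bounded operators on $\h$ (the boundedness of $T_F$ and hence of $T_F T_F^*$ being guaranteed by Theorem \ref{TF}), establishing equality of the associated sesquilinear forms suffices, because an element of $\h$ that is annihilated by every inner product against $g \in \h$ must be zero.

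First I would invoke Theorem \ref{TF} to record the two ingredients I need. On one hand, the theorem tells me that the analysis operator $T_F^*$ maps $\h$ into $L^2(\Omega,\mu)$ and is given explicitly by $(T_F^* f)(\omega) = \langle f, F(\omega)\rangle$; in particular the function $\omega \mapsto \langle f, F(\omega)\rangle$ is a genuine element of $L^2(\Omega,\mu)$, so it is a legitimate argument for the synthesis operator $T_F$. On the other hand, the same theorem supplies the weak defining formula for $T_F$, namely $\langle T_F \varphi, h\rangle = \int_\Omega \varphi(\omega)\langle F(\omega), h\rangle\, d\mu(\omega)$ for $\varphi \in L^2(\Omega,\mu)$ and $h \in \h$.

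With these in hand, the computation is a direct substitution: taking $\varphi = T_F^* f$ and $h = g$ in the formula for $T_F$, I would write
$$\langle T_F T_F^* f, g\rangle = \int_\Omega (T_F^* f)(\omega)\,\langle F(\omega), g\rangle\, d\mu(\omega) = \int_\Omega \langle f, F(\omega)\rangle\,\langle F(\omega), g\rangle\, d\mu(\omega),$$
and this last integral is exactly $\langle S_F f, g\rangle$ by the weak definition of the continuous frame operator recorded above. Since $f$ and $g$ are arbitrary, the identity $S_F = T_F T_F^*$ follows.

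There is no serious obstacle here; the argument is essentially bookkeeping. The only point requiring a moment of care is the legitimacy of feeding $T_F^* f$ into $T_F$, which rests on the fact that $T_F^* f$ lies in $L^2(\Omega,\mu)$ — and this is precisely what Theorem \ref{TF} guarantees. One should also note that this lemma assumes only that $F$ is Bessel (not a full frame), which is all that is needed for $S_F$, $T_F$, and $T_F^*$ to be well defined and bounded.
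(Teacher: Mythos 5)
Your proof is correct. The paper itself offers no proof of this lemma --- it is quoted from \cite{RaNaDe} --- and your argument is exactly the standard one: substitute $\varphi = T_F^* f$ into the weak defining formula for $T_F$ from Theorem \ref{TF} and recognize the resulting integral $\int_\Omega \langle f, F(\omega)\rangle \langle F(\omega), g\rangle\, d\mu(\omega)$ as the sesquilinear form $\Psi(f,g) = \langle S_F f, g\rangle$; your observation that the Bessel bound alone (no lower frame bound) suffices for $S_F$, $T_F$, and $T_F^*$ to be well defined and bounded is also correct and matches the paper's own derivation of $\|\Psi\| \leq B$ via Cauchy--Schwarz.
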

The following proposition is a criterion for a continuous frame for
a closed subspace of $\h$ being a continuous frame. For
discrete case see [\ref{C4}, Lemma 5.2.1]
\begin{prop}\cite{RaNaDe}
Suppose that $F$ is a continuous frame with respect to
$(\Omega,\mu)$ for a closed subspace $K$ of $\h$, where $\mu$ is a
$\sigma$-finite measure. Then $F$ is a continuous frame for $\h $ if
and only if $T^{*}$ is injective.

\end{prop}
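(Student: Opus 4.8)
The plan is to show that both of the asserted conditions are in fact equivalent to the single condition $K=\h$, by pinning down the kernel of the analysis operator $T_F^{*}$ (the operator denoted $T^{*}$ in the statement) precisely as $K^{\perp}$. Recall that $F$ being a continuous frame for the closed subspace $K$ means that each $F(\omega)$ lies in $K$ and that the two inequalities in \eqref{deframe} hold for every $f\in K$, with bounds $A,B>0$. I would first record that this forces $F$ to be Bessel for all of $\h$, so that $T_F^{*}\colon\h\to L^{2}(\Omega,\mu)$ is a well-defined bounded operator by Theorem \ref{TF}. Indeed, writing $P_K$ for the orthogonal projection onto $K$ and decomposing $f=P_Kf+(I-P_K)f$, the component $(I-P_K)f$ lies in $K^{\perp}$, hence is orthogonal to every $F(\omega)\in K$; therefore $\langle f,F(\omega)\rangle=\langle P_Kf,F(\omega)\rangle$ for all $\omega$, and the upper bound on $K$ gives $\int_{\Omega}|\langle f,F(\omega)\rangle|^{2}\,d\mu(\omega)=\int_{\Omega}|\langle P_Kf,F(\omega)\rangle|^{2}\,d\mu(\omega)\le B\|P_Kf\|^{2}\le B\|f\|^{2}$.

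The crux of the argument is the identity $\ker T_F^{*}=K^{\perp}$. The inclusion $K^{\perp}\subseteq\ker T_F^{*}$ is immediate, since $(T_F^{*}f)(\omega)=\langle f,F(\omega)\rangle=0$ whenever $f\perp K$ and $F(\omega)\in K$. For the reverse inclusion I would take $f\in\ker T_F^{*}$; then $T_F^{*}P_Kf=T_F^{*}f=0$ by the computation above, so $\int_{\Omega}|\langle P_Kf,F(\omega)\rangle|^{2}\,d\mu(\omega)=0$, and applying the \emph{lower} frame inequality on $K$ to the element $P_Kf\in K$ yields $A\|P_Kf\|^{2}\le 0$, whence $P_Kf=0$ and $f\in K^{\perp}$. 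I expect this identity to be the main obstacle, since it is the only step that genuinely invokes the lower frame bound on $K$ rather than merely the Bessel property; everything else is bookkeeping around the projection $P_K$.

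With $\ker T_F^{*}=K^{\perp}$ in hand, the equivalence assembles quickly. If $F$ is a continuous frame for $\h$, then its lower bound gives $A'\|f\|^{2}\le\int_{\Omega}|\langle f,F(\omega)\rangle|^{2}\,d\mu(\omega)=\|T_F^{*}f\|^{2}$ for all $f\in\h$, so $T_F^{*}$ is bounded below and in particular injective. Conversely, if $T_F^{*}$ is injective, then $K^{\perp}=\ker T_F^{*}=\{0\}$, hence $K=\h$; but $F$ is by hypothesis a continuous frame for $K$, so it is a continuous frame for $\h$. This closes the loop and completes the proof.
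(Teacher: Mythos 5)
Your proof is correct, but it is worth noting that the paper itself gives no argument for this proposition: it is quoted from \cite{RaNaDe}, and the machinery the paper assembles points to a shorter, operator-theoretic route that differs from yours. Namely, by Theorem \ref{Tonto} applied to the Hilbert space $K$, the hypothesis that $F$ is a continuous frame for $K$ says exactly that the synthesis operator $T_{F}\colon L^{2}(\Omega,\mu)\to\h$ is bounded with range equal to $K$; the general identity $\ker T^{*}=(\mathrm{ran}\,T)^{\perp}$ for bounded operators then gives $\ker T_{F}^{*}=K^{\perp}$ with no computation, and the equivalence reads: $T_{F}^{*}$ injective iff $K^{\perp}=\{0\}$ iff $K=\h$ iff $T_{F}$ is onto $\h$ iff $F$ is a frame for $\h$ (again by Theorem \ref{Tonto}). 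You instead establish $\ker T_{F}^{*}=K^{\perp}$ by hand: the projection identity $\langle f,F(\omega)\rangle=\langle P_{K}f,F(\omega)\rangle$ yields the Bessel property on all of $\h$ and the inclusion $K^{\perp}\subseteq\ker T_{F}^{*}$, and the lower frame bound on $K$ applied to $P_{K}f$ yields the reverse inclusion. Both arguments are sound, and each buys something: the operator-theoretic route is essentially free once Theorem \ref{Tonto} is available (this is also the shape of the discrete proof in [\ref{C4}, Lemma 5.2.1]), whereas your route uses only the defining inequalities and Theorem \ref{TF}, and in particular never invokes $\sigma$-finiteness of $\mu$ --- that hypothesis is needed for Theorem \ref{Tonto} but nowhere in your argument, so your proof actually establishes the proposition for an arbitrary positive measure.
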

It is well known that discrete Bessel sequences in a Hilbert space
are norm bounded above: if
$$ \sum_n|\langle f,f_n\rangle|^2\leq B\parallel f\parallel^2$$  for all $ f\in\h$, then $ \parallel f_n\parallel\leq\sqrt{B}$
for all $n$. For continuous Bessel mappings, the following example
shows that, it is possible to make a continuous Bessel mapping which
is unbounded.

\begin{ex}\label{PP}
Take an (essentially) unbounded (Lebesgue) measurable function $a :
\RR\rightarrow \CC$ such such that $a\in  L^2(\RR) \setminus
L^\infty(\RR)$. It is easy to see that such functions indeed exist;
consider for example the function
$$b(x):=\frac{1}{\sqrt{|x|}}, 0<|x|<1, \quad b(x)=\frac{1}{|x|^2}, |x|\geq 1\quad and \quad b(x)=0, x=0.$$ This function is clearly in $L^1(\RR) \setminus L^\infty(\RR)$
 and furthermore, $b(x)\geq 0$ for all $x\in \RR$. Now take $a(x)=\sqrt{b(x)}$. Choose a fixed vector $h\in\h, h \neq 0$. Then, the mapping
 $$F:\RR\rightarrow\h, \omega\mapsto F(\omega)=a(\omega)h$$     is weakly (Lebesgue) measurable and a continuous Bessel mapping, since
$$\int_{\Omega}|\langle
f,F({\omega})\rangle|^{2}\,d\mu(\omega) \leq \|h\|^2
\|a\|^2\|f\|^{2}$$ for all $f\in\h$, but $\|F(\omega)\|$ is
unbounded, since $a$ is unbounded.

\end{ex}
The following example shows that: even continuous frames need not
necessarily norm bounded.

\begin{ex}
Let $F:\RR\rightarrow \h$ be a norm-unbounded continuous Bessel
mapping with Bessel constant $B_1$ ( like example \ref{PP}) and
$G:\RR\rightarrow\h$ be a norm-bounded continuous frame with bounds
$A_2\leq B_2$ and also assume that $B_1<A_2$. Then
$F-G:\RR\rightarrow \h$ is a norm-unbounded frame. It is clear that
for any $f\in\h$
\begin{eqnarray*}
\int_{\Omega}|\langle f,F(\omega)-G(\omega)\rangle
|^2d\mu(\omega)&\leq&\int_{\Omega}|\langle f,F(\omega)\rangle
|^2d\mu(\omega)+\int_{\Omega}|\langle f,G(\omega)\rangle
|^2d\mu(\omega)\\
&\leq& (B_1+B_2)\|f\|^2.
\end{eqnarray*} So $F-G$ is a continuous Bessel mapping with bound $
B_1+B_2$. For the lower bound, observe that
\begin{eqnarray*}
\left(\int_{\Omega}|\langle f,F(\omega)-G(\omega)\rangle|^{2}
\,d\mu(\omega)\right)^{\frac{1}{2}}&\geq&\left(\int_{\Omega}|\langle
f,G(\omega)\rangle|^{2}
\,d\mu(\omega)\right)^{\frac{1}{2}}-\left(\int_{\Omega}|\langle
f,F(\omega)\rangle|^{2}
\,d\mu(\omega)\right)^{\frac{1}{2}}\\
&\geq&(\sqrt{A_2}-\sqrt{B_1})\|f\|
\end{eqnarray*}
and the lower bound is established. The mapping $F-G$ is not norm
bounded, since $$\|F(\omega)-G(\omega)\|\geq \|F(\omega)\|-M$$ where
$\|G(\omega)\|\leq M \quad a.e.$

\end{ex}
\section{Construction of continuous frames}
For any separable Hilbert space there exists a frame  and more
generally any separable Banach space can be equipped with a Banach
frame with respect to an appropriately chosen sequence space
\cite{casazza}. Concerning the existence of continuous frames, it is
natural to ask: dose there exist continuous frames for any Hilbert
space and any measure space? The existence of continuous frame
depends on the dimension of space and the measure of $\Omega$ which
we  derive at the following propositions. For the answer we consider
four cases:
\begin{itemize}
\item $\mu(\Omega)=\infty$ and $dim\h=\infty$;
\item $\mu(\Omega)<\infty$ and $dim\h<\infty$;
\item $\mu(\Omega)=\infty$ and $dim\h<\infty$;
\item $\mu(\Omega)<\infty$ and $dim\h=\infty$.
\end{itemize}

\begin{prop}
Let $(\Omega,\mu)$ be a $\sigma$-finite measure space with infinite
measure and $\h$ an infinite dimensional separable Hilbert space.
Then  there exists a continuous  Parseval frame $F :\Omega\to\h$
with respect to $(\Omega,\mu)$.
\end{prop}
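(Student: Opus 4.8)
The plan is to exploit the two hypotheses separately: the infinite dimensionality of $\h$ supplies a countable orthonormal basis, while $\sigma$-finiteness together with $\mu(\Omega)=\infty$ lets me chop $\Omega$ into countably infinitely many blocks of positive finite measure, one block per basis vector. Assigning to each block a suitably normalized copy of the corresponding basis vector then produces a Parseval frame by a direct appeal to Parseval's identity in $\h$.

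First I would fix an orthonormal basis $\{e_n\}_{n=1}^{\infty}$ of $\h$, which exists because $\h$ is separable and infinite dimensional. Next, using $\sigma$-finiteness, write $\Omega=\bigcup_n A_n$ as a disjoint union with $\mu(A_n)<\infty$. I discard the $A_n$ of measure zero, whose union $Z$ is a countable union of null sets and hence $\mu$-null, and relabel what remains as $\{\Omega_n\}$. This family is necessarily infinite: if only finitely many blocks had positive measure, then $\mu(\Omega)$ would be a finite sum of finite numbers, contradicting $\mu(\Omega)=\infty$. Thus $\Omega=Z\cup\bigcup_{n=1}^{\infty}\Omega_n$ with $0<\mu(\Omega_n)<\infty$ for every $n$.

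Then I would define
$$F(\omega)=\frac{1}{\sqrt{\mu(\Omega_n)}}\,e_n \quad (\omega\in\Omega_n), \qquad F(\omega)=0 \quad (\omega\in Z).$$
For any $f\in\h$ the function $\omega\mapsto\langle f,F(\omega)\rangle$ is constant on each $\Omega_n$ and vanishes on $Z$, hence measurable, so $F$ is weakly measurable. Using that the $\Omega_n$ are disjoint and cover $\Omega$ up to the null set $Z$, the frame integral becomes
$$\int_{\Omega}|\langle f,F(\omega)\rangle|^{2}\,d\mu(\omega)=\sum_{n=1}^{\infty}\frac{1}{\mu(\Omega_n)}\,|\langle f,e_n\rangle|^{2}\,\mu(\Omega_n)=\sum_{n=1}^{\infty}|\langle f,e_n\rangle|^{2}=\|f\|^{2}$$
by Parseval's identity for the orthonormal basis. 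This is exactly the case $A=B=1$ in \eqref{deframe}, so $F$ is a continuous Parseval frame with respect to $(\Omega,\mu)$.

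The only genuinely delicate point is the partition step: one must confirm that an infinite $\sigma$-finite measure really does admit countably infinitely many disjoint pieces of strictly positive finite measure, and that the left-over null part $Z$ can be absorbed harmlessly, since it contributes nothing to the integral. Everything after that reduces to the one-line computation above via Parseval's identity, so I expect no further obstacles.
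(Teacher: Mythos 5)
Your proof is correct and follows essentially the same route as the paper: partition $\Omega$ into countably many disjoint pieces of finite positive measure and assign to each piece the normalized orthonormal basis vector $e_n/\sqrt{\mu(\Omega_n)}$, then invoke Parseval's identity. The only difference is that you carefully justify the paper's ``without loss of generality'' step by absorbing the null blocks into a set $Z$ where $F=0$ and by noting that infinitely many blocks of positive measure must remain since $\mu(\Omega)=\infty$ — a worthwhile clarification, but the same argument.
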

\begin{proof} Since $\Omega$ is $\sigma$-finite, it can be
written as a disjoint union $\Omega=\bigcup_{k\in \NN}\Omega_k$ of
countably many subsets $\Omega_k\subseteq \Omega$ such that
$\mu(\Omega_k) <\infty$ for all $k\in\NN$. Without loss of
generality, assume that $\mu(\Omega_k)> 0$ for all $k$. Let
$\{e_k\}_{k\in\NN}$ be the orthonormal base for $\h$.  Define the
function $F :\Omega\to\h$ by
$$\omega\mapsto F(\omega)=\frac{1}{\sqrt{\mu(\Omega_k)}}e_k ,
\quad (\omega\in\Omega_k).
$$Then, for all $f\in\h$,
$$\int_{\Omega}|\langle
f,F({\omega})\rangle|^{2}\,d\mu(\omega)=\sum_{k\in
\NN}\int_{\Omega_k}|\langle f,F({\omega})\rangle|^{2}\,d\mu(\omega)=
\sum_{k\in \NN}|\langle f,e_k\rangle|^{2}=A\|f\|^2.$$
 Thus, $F$ is a continuous  Parseval frame.
\end{proof}

It is possible to find a frame for any separable Hilbert space and
consequently a continues frame for any separable Hilbert space.
\begin{prop}\label{3.1}
Let $(\Omega,\mu)$ be a $\sigma$-finite measure space with infinite
measure and $\h$ an infinite dimensional separable Hilbert space.
Then  there exists a continuous  frame $F :\Omega\to\h$ with respect
to $(\Omega,\mu)$.
\end{prop}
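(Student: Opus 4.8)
The plan is to reduce the statement to the discrete frame case, reusing the $\sigma$-finite decomposition from the preceding proposition. Note first that the claim is formally weaker than the Parseval result just proved, since every Parseval frame is a frame; one could therefore simply quote the previous proposition. To extract more content, however, I would replace the orthonormal basis there by an \emph{arbitrary} discrete frame, thereby showing that any discrete frame for $\h$ lifts to a continuous frame over $(\Omega,\mu)$. Recall that every separable Hilbert space admits a discrete frame (for existence in the general Banach setting see \cite{casazza}); fix such a frame $\{f_k\}_{k\in\NN}$ for $\h$ with bounds $A\le B$.

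Next, using $\sigma$-finiteness, I would write $\Omega=\bigcup_{k\in\NN}\Omega_k$ as a disjoint union with $0<\mu(\Omega_k)<\infty$ for every $k$ (merging or discarding null pieces as in the previous proof), and define $F:\Omega\to\h$ by
$$F(\omega)=\frac{1}{\sqrt{\mu(\Omega_k)}}\,f_k,\qquad(\omega\in\Omega_k).$$
Since $F$ is constant on each measurable piece $\Omega_k$, the scalar function $\omega\mapsto\langle f,F(\omega)\rangle$ is measurable for every $f\in\h$, so $F$ is weakly measurable, as required by part (1) of the definition of continuous frame.

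Finally, I would verify the two-sided bound by a direct computation: splitting the integral over the pieces $\Omega_k$ and using that the integrand is constant on each piece gives
$$\int_{\Omega}|\langle f,F(\omega)\rangle|^2\,d\mu(\omega)=\sum_{k\in\NN}\frac{1}{\mu(\Omega_k)}|\langle f,f_k\rangle|^2\,\mu(\Omega_k)=\sum_{k\in\NN}|\langle f,f_k\rangle|^2,$$
where the interchange of sum and integral is legitimate because every term is nonnegative (Tonelli). The right-hand side lies between $A\|f\|^2$ and $B\|f\|^2$ by the discrete frame inequality for $\{f_k\}$, so $F$ is a continuous frame with the same bounds $A,B$. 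There is no genuine obstacle here: the whole content is that the normalization $1/\sqrt{\mu(\Omega_k)}$ makes the continuous frame inequality coincide with the discrete one. The only point meriting care is the construction of the partition $\{\Omega_k\}$ into countably many pieces of finite positive measure, which is exactly where $\sigma$-finiteness (together with infinite total measure, ensuring infinitely many such pieces) enters.
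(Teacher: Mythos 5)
Your proposal is correct and follows essentially the same route as the paper: decompose $\Omega$ into countably many disjoint pieces of finite positive measure via $\sigma$-finiteness, transplant a discrete frame $\{f_k\}$ onto the pieces with the normalization $1/\sqrt{\mu(\Omega_k)}$, and observe that the integral collapses to the discrete frame sum. Your added remarks (weak measurability of the piecewise-constant map, Tonelli for the sum--integral interchange, and the need for infinitely many pieces of positive measure) only make explicit what the paper leaves implicit.
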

\begin{proof} Since $\Omega$ is $\sigma$-finite, it can be written as a disjoint
union $\Omega=\bigcup\Omega_k$ of countably many subsets
$\Omega_k\subseteq \Omega$ such that $\mu(\Omega_k) <\infty$ for all
$k\in\NN$. Without loss of generality, assume that $\mu(\Omega_k)>
0$ for all $k$. Let $\{f_k\}_{k\in \NN}$ be a frame for $\h$ with
bounds $A$ and $B$.  Define the function $F :\Omega\to\h$ by
$$\omega\mapsto F(\omega)=\frac{1}{\sqrt{\mu(\Omega_k)}}f_k ,
\quad (\omega\in\Omega_k).
$$Then, for all $f\in\h$,
$$\int_{\Omega}|\langle
f,F({\omega})\rangle|^{2}\,d\mu(\omega)=\sum_{k\in \NN}|\langle
f,f_k\rangle|^{2}.$$ So $$A\|f\|^2\leq\int_{\Omega}|\langle
f,F({\omega})\rangle|^{2}\,d\mu(\omega)\leq B\|f\|^2 \quad
(f\in\h).$$
 Thus, $F$ is a continuous  frame with
frame bounds $A$ and $B$.
\end{proof}

\begin{prop}
Let $(\Omega,\mu)$ be a  measure space with finite measure and $\h$
a finite dimensional  Hilbert space. Then there exists a continuous
frame ( Parseval frame) $F :\Omega\to\h$ with respect to
$(\Omega,\mu)$.
\end{prop}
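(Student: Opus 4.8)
The plan is to mimic the constructions of the two preceding propositions, replacing the countable partition forced by $\sigma$-finiteness with a \emph{finite} partition adapted to $\dim\h$. Write $n=\dim\h<\infty$ and fix an orthonormal basis $\{e_k\}_{k=1}^n$ of $\h$. The first and decisive step is to split the underlying space into $n$ non-negligible pieces: I would produce a disjoint measurable decomposition $\Omega=\bigcup_{k=1}^n\Omega_k$ with $0<\mu(\Omega_k)<\infty$ for every $k$ (the upper bound being automatic from $\mu(\Omega)<\infty$). Granting this, the rest is a direct computation.

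Given such a partition, define
$$F(\omega)=\frac{1}{\sqrt{\mu(\Omega_k)}}\,e_k,\qquad \omega\in\Omega_k .$$
This $F$ is weakly measurable, being a finite simple $\h$-valued function, and for every $f\in\h$ the integral splits over the $\Omega_k$, each term contributing $\mu(\Omega_k)\cdot\mu(\Omega_k)^{-1}|\langle f,e_k\rangle|^2$, so that
$$\int_{\Omega}|\langle f,F(\omega)\rangle|^{2}\,d\mu(\omega)=\sum_{k=1}^n|\langle f,e_k\rangle|^2=\|f\|^2,$$
by Parseval's identity for the finite orthonormal basis. Hence $F$ is a continuous Parseval frame, and in particular a continuous frame. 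If instead one wants a genuinely non-tight frame, I would repeat the argument of Proposition \ref{3.1} verbatim: take any finite frame $\{f_k\}_{k=1}^m$ of $\h$ with bounds $A\le B$, partition $\Omega$ into $m$ positive-measure pieces, and set $F(\omega)=\mu(\Omega_k)^{-1/2}f_k$ on $\Omega_k$; the same collapse gives $\int_\Omega|\langle f,F(\omega)\rangle|^2\,d\mu=\sum_{k=1}^m|\langle f,f_k\rangle|^2$, whence the bounds $A\|f\|^2\le\cdots\le B\|f\|^2$.

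The only genuine obstacle is the existence of the partition in the first step, and here finiteness of $\mu$ alone is \emph{not} enough: one also needs $(\Omega,\mu)$ to be rich enough to be cut into $n$ sets of positive measure. For instance, if $\mu=\delta_{\omega_0}$ is a single point mass and $\dim\h\ge 2$, then $\int_\Omega|\langle f,F(\omega)\rangle|^2\,d\mu=|\langle f,F(\omega_0)\rangle|^2$ vanishes on the orthogonal complement of the single vector $F(\omega_0)$, so no lower frame bound can hold and no continuous frame exists. Thus I would either add the hypothesis that $\mu$ is non-atomic (or, more generally, that $\mu$ has at least $n$ atoms), or record the sharp statement that a continuous frame for an $n$-dimensional $\h$ exists precisely when $\Omega$ admits a partition into $n$ sets of positive measure. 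To justify the partition under a non-atomicity assumption I would invoke the intermediate-value property of non-atomic finite measures (Sierpi\'nski's theorem): the measure of measurable sub-pieces of a positive-measure set ranges over a whole interval, so one can successively carve off sets of any prescribed positive measure and thereby obtain the required $n$ pieces.
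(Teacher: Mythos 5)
Your construction is the same one the paper uses: partition $\Omega$ into finitely many disjoint sets of positive finite measure, place a rescaled finite frame (or, for the Parseval case, an orthonormal basis) on the pieces, and collapse the integral to the discrete frame sum $\sum_k|\langle f,f_k\rangle|^2$. That computation is correct and matches the paper's verbatim. Where you genuinely depart from the paper is on the existence of the partition, and this is the valuable part of your write-up. The paper's proof simply posits $\Omega=\bigcup_{k=1}^{N}\Omega_k$ with the $\Omega_k$ disjoint and $0<\mu(\Omega_k)<\infty$, offering no justification that a finite measure space can be so divided; you correctly identify that finiteness of $\mu$ alone does not supply this, and your Dirac-mass example is a genuine counterexample to the proposition as stated: if $\mu=\delta_{\omega_0}$ and $\dim\h\geq 2$, then for any weakly measurable $F$ the quantity $\int_{\Omega}|\langle f,F(\omega)\rangle|^{2}\,d\mu(\omega)=|\langle f,F(\omega_0)\rangle|^{2}$ vanishes on the orthogonal complement of $F(\omega_0)$, so no lower frame bound can hold. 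The same failure occurs for any purely atomic $\mu$ with fewer atoms than $\dim\h$. Your repair --- adding non-atomicity (or at least $n$ atoms) and invoking the intermediate-value property of non-atomic finite measures to carve out the $n$ pieces --- is exactly the missing ingredient, and your ``sharp'' reformulation (a continuous frame for an $n$-dimensional $\h$ exists precisely when $\Omega$ admits a partition into $n$ sets of positive measure) is the statement the paper should have made. In short: your proof is the paper's proof plus the hypothesis it silently needs; the gap you flag lies in the paper, not in your argument.
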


\begin{proof} Let $dim\h=n$ and $\{f_k\}_{k=1}^N$ be a frame for
$\h$ and $\Omega=\bigcup_{k=1}^{N}\Omega_k$ where $\Omega_k\subseteq
\Omega$, $1\leq k\leq N$, $0<\mu(\Omega_k)<\infty$ and $\Omega_k$'s
mutually disjoint . Then $
F(\omega)=\frac{1}{\sqrt{\mu(\Omega_k)}}f_k , \omega\in\Omega_k$ is
a continuous  frame for $\h$. If we chose the orthonormal base
instance frame for $\h$, then $F$ is continuous Parseval frame.
\end{proof}

\begin{prop}
Let $(\Omega,\mu)$ be a $\sigma$-finite measure space with infinite
measure and $\h$ a finite dimensional  Hilbert space. Then there
exists a continuous frame $F :\Omega\to\h$ with respect to
$(\Omega,\mu)$.
\end{prop}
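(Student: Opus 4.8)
The plan is to mirror the proof of Proposition \ref{3.1}, with the finite-dimensional frame replaced by a suitable \emph{infinite} frame for $\h$. The only genuinely new point is that, although $\dim\h=n<\infty$, the decomposition of $\Omega$ forces countably \emph{infinitely} many frame vectors, so I first record that such frames exist.

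First I would invoke $\sigma$-finiteness to write $\Omega=\bigcup_{k\in\NN}\Omega_k$ as a disjoint union with $0<\mu(\Omega_k)<\infty$ for all $k$. Because $\mu(\Omega)=\infty$ while each $\mu(\Omega_k)<\infty$, this union must be genuinely infinite; hence I need a frame for $\h$ indexed by $\NN$. The key step is therefore to produce a frame $\{f_k\}_{k\in\NN}$ for the $n$-dimensional space $\h$ with bounds $A,B>0$. Let $\{e_1,\dots,e_n\}$ be an orthonormal basis of $\h$ and let $(a_m)_{m\in\NN}$ be any sequence with $0<c:=\sum_{m}a_m^2<\infty$. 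Putting $f_{(i,m)}:=a_m e_i$ for $1\le i\le n$, $m\in\NN$, and reindexing the countable family $\{f_{(i,m)}\}$ by $\NN$, one computes
\[
\sum_{k}|\langle f,f_k\rangle|^2=\sum_{m}a_m^2\sum_{i=1}^n|\langle f,e_i\rangle|^2=c\,\|f\|^2 ,
\]
so $\{f_k\}_{k\in\NN}$ is a tight frame for $\h$ with $A=B=c$. (This also illustrates the general phenomenon that in finite dimensions a Bessel family may be infinite, since $\sum_k\|f_k\|^2=\mathrm{tr}(S_F)\le B\dim\h<\infty$.)

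Finally I would define $F(\omega)=\frac{1}{\sqrt{\mu(\Omega_k)}}f_k$ for $\omega\in\Omega_k$. Since $F$ is constant on each measurable block $\Omega_k$, it is weakly measurable, and the same computation as in Proposition \ref{3.1} gives, for every $f\in\h$,
\[
\int_{\Omega}|\langle f,F(\omega)\rangle|^2\,d\mu(\omega)=\sum_{k\in\NN}\int_{\Omega_k}\frac{1}{\mu(\Omega_k)}|\langle f,f_k\rangle|^2\,d\mu(\omega)=\sum_{k\in\NN}|\langle f,f_k\rangle|^2 ,
\]
whence $A\|f\|^2\le\int_\Omega|\langle f,F(\omega)\rangle|^2\,d\mu(\omega)\le B\|f\|^2$, so $F$ is a continuous frame with respect to $(\Omega,\mu)$.

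As for the main obstacle: there is no serious analytic difficulty, and in fact the whole argument is routine once the right discrete object is identified. The one thing one must notice is that the finiteness of $\dim\h$ does \emph{not} prevent using infinitely many frame vectors, which is exactly what the infinitely many blocks $\Omega_k$ demand; one only needs the frame to be assembled from a square-summable family of weights so that it stays Bessel. With such an infinite frame for $\h$ in hand, the transfer of the frame inequalities from the discrete to the continuous setting is identical to the infinite-dimensional case.
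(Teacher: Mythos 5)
Your proof is correct and follows essentially the same route as the paper: the same $\sigma$-finite decomposition $\Omega=\bigcup_k\Omega_k$ with $0<\mu(\Omega_k)<\infty$, the same definition $F(\omega)=\mu(\Omega_k)^{-1/2}f_k$ on $\Omega_k$, and the same computation reducing the continuous frame inequalities to the discrete ones. The only difference is that you explicitly construct the required infinite frame for the finite-dimensional space (square-summably weighted copies of an orthonormal basis), a point the paper's proof merely asserts parenthetically, so your write-up is in fact slightly more complete.
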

\begin{proof}
 Since $\Omega$ is $\sigma$-finite, it can be written
as a disjoint union $\Omega=\bigcup\Omega_k$ of countably many
subsets $\Omega_k\subseteq \Omega$ such that $\mu(\Omega_k) <\infty$
for all $k$. Without loss of generality, assume that $\mu(\Omega_k)>
0$ for all $k\in\NN$. Let $\{f_k\}_{k=1}^{\infty}$ ( it is possible
to find a frame with infinitely member for finite dimensional
Hilbert space) be a frame for $\h$ with bounds $A$ and $B$. Define
the function $F :\Omega\to\h$ by
$$\omega\mapsto F(\omega)=\frac{1}{\sqrt{\mu(\Omega_k)}}f_k ,
\quad(\omega\in\Omega_k).
$$Then, for all $f\in\h$,
$$\int_{\Omega}|\langle
f,F({\omega})\rangle|^{2}\,d\mu(\omega)=\sum_{k\in \NN}|\langle
f,f_k\rangle|^{2}.$$ So $$A\|f\|^2\leq\int_{\Omega}|\langle
f,F({\omega})\rangle|^{2}\,d\mu(\omega)\leq B\|f\|^2\quad(f\in\h).$$
 Thus, $F$ is a continuous  frame with
frame bounds $A$ and $B$.
\end{proof}

In the case $\mu(\Omega)<\infty$ and $dim\h=\infty$, we have only
Bessel mapping.
\begin{prop}
Let $(\Omega,\mu)$ be a $\sigma$-finite measure space with finite
measure and $\h$ an infinite dimensional separable Hilbert space.
Then there exists a continuous Bessel mapping $F :\Omega\to\h$ with
respect to $(\Omega,\mu)$.
\end{prop}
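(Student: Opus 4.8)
The plan is to reuse the partition technique of the earlier propositions, carrying an orthonormal basis of $\h$ onto the pieces of $\Omega$, but \emph{without} the normalising factor $1/\sqrt{\mu(\Omega_k)}$: because the total mass $\mu(\Omega)$ is finite, transporting the basis vectors themselves already yields a bounded analysis map, and the finiteness of the mass is exactly what obstructs a lower bound. First I would use $\sigma$-finiteness to write $\Omega=\bigcup_{k\in\NN}\Omega_k$ as a disjoint union of measurable sets with $0<\mu(\Omega_k)<\infty$. Since $\dim\h=\infty$, the chosen orthonormal basis $\{e_k\}_{k\in\NN}$ has infinitely many members, so I need infinitely many pieces of positive measure; guaranteeing this is the only substantive hypothesis on $\Omega$ (it must not reduce, modulo null sets, to a finite union of atoms), and it is the step I expect to need the most care.

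Then I would define $F(\omega)=e_k$ for $\omega\in\Omega_k$. Weak measurability is immediate: for fixed $f\in\h$ the map $\omega\mapsto\langle f,F(\omega)\rangle$ equals the constant $\langle f,e_k\rangle$ on each measurable set $\Omega_k$, hence is measurable. For the Bessel bound I would split the integral over the partition (legitimate by nonnegativity of the integrand) and apply Parseval's identity:
\[
\int_{\Omega}|\langle f,F(\omega)\rangle|^{2}\,d\mu(\omega)=\sum_{k\in\NN}\mu(\Omega_k)\,|\langle f,e_k\rangle|^{2}\leq\mu(\Omega)\sum_{k\in\NN}|\langle f,e_k\rangle|^{2}=\mu(\Omega)\,\|f\|^{2}.
\]
Thus $F$ is a continuous Bessel mapping with Bessel constant $\mu(\Omega)$.

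To confirm that this case genuinely differs from the previous ones, I would finally note that $F$ admits no positive lower bound: from $\sum_{k}\mu(\Omega_k)=\mu(\Omega)<\infty$ we get $\mu(\Omega_k)\to 0$, so evaluating on $f=e_k$ gives $\int_{\Omega}|\langle e_k,F(\omega)\rangle|^{2}\,d\mu(\omega)=\mu(\Omega_k)\to 0$ while $\|e_k\|=1$; hence no $A>0$ can serve as a lower frame bound, and $F$ is Bessel but not a frame. Beyond the measure-theoretic partition issue, every step is a routine computation, so I expect the decomposition of a finite-measure space into countably many positive-measure pieces to be the real crux.
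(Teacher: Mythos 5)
Your Bessel estimate, the measurability argument, and the closing observation that your $F$ admits no lower frame bound are all correct. But there is a genuine gap, and it sits exactly where you predicted: the partition you demand does not exist in general. You insist on writing $\Omega$ as a disjoint union of \emph{infinitely many} measurable sets of positive finite measure, and a finite measure space need not admit such a decomposition. Take for instance $\Omega=\{1,2\}$ with counting measure: the only partitions of $\Omega$ into positive-measure pieces are $\{\Omega\}$ and $\{\{1\},\{2\}\}$, both finite, so your map $F$ cannot even be defined. Your parenthetical remark that $\Omega$ ``must not reduce, modulo null sets, to a finite union of atoms'' is the correct characterization of when infinitely many pieces exist, but the proposition carries no such hypothesis; $\sigma$-finiteness (automatic here, since $\mu(\Omega)<\infty$) gives countably many pieces of finite measure, and nothing can force infinitely many pieces of \emph{positive} measure. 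So the proof as written does not cover all cases of the statement.

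The missing idea is that the requirement of infinitely many pieces is not only unattainable in general but unnecessary: a Bessel mapping involves no completeness and no lower bound, so there is no need to use every basis vector. Drop that requirement and your computation goes through verbatim for any finite partition, even the trivial one: $F(\omega)=e_1$ for all $\omega$ gives $\int_{\Omega}|\langle f,F(\omega)\rangle|^{2}\,d\mu(\omega)=\mu(\Omega)|\langle f,e_1\rangle|^{2}\leq\mu(\Omega)\|f\|^{2}$. This is also where your route differs from the paper's: there, $\Omega$ is split into finitely many positive-measure pieces $\Omega_{1},\dots,\Omega_{N}$, the first $N$ members of a Bessel sequence $\{f_k\}$ are transported with the normalization $\frac{1}{\sqrt{\mu(\Omega_k)}}f_k$, and the resulting bound is the discrete Bessel constant $B$, independent of $\mu(\Omega)$. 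Your unnormalized variant trades that normalization for the bound $\mu(\Omega)$, which is a perfectly good (and slightly cleaner) alternative --- but only once the insistence on exhausting the whole orthonormal basis is removed.
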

\begin{proof} Let $\{f_k\}_{k=1}^\infty\infty$ be a Bessel sequence for $\h$
with bounds $A$, $B$ and $\Omega=\bigcup_{k=1}^{N}\Omega_k$ where
$\Omega_k\subseteq \Omega$, $1\leq k\leq N$,
$0<\mu(\Omega_k)<\infty$. Let $
F(\omega)=\frac{1}{\sqrt{\mu(\Omega_k)}}f_k , \omega\in\Omega_k$, then for all $f\in\h$,
$$\int_{\Omega}|\langle
f,F({\omega})\rangle|^{2}\,d\mu(\omega)=\sum_{k=1}^N|\langle
f,f_k\rangle|^{2}\leq B\|f\|^2$$
 Thus, $F$ is a continuous  Bessel mapping with
bound $B$.

\end{proof}
\section{Gabor and wavelet systems are continuous frames}\label{gaborandwavelet}

Well known examples for frames are wavelet and Gabor systems. The
corresponding continuous wavelet and STFT transforms give rise to
continuous frames. We make use of the following unitary operators on
$L^2(\RR)$:
\begin{itemize}
\item Translation: $T_x f(t) := f(t - x)$, for $f \in L^2(\RR)$ and $x \in
\RR$;
\item Modulation: $M_y f(t) := e^{2\pi i y\cdot t} f(t)$, for $f \in L^2(\RR)$ and $y \in
\RR$;
\item Dilation: $D_z f(t) := \frac{1}{|z|^{\frac{1}{2}}} f(\frac{t}{z})$, for $f \in L^2(\RR)$ and $z\neq
0$.
\end{itemize}
\begin{defn}
Let $ \psi \in L^{2}(\RR)$ be admissible, i.e.,
$$C_{\psi}:=\int_{-\infty}^{+\infty}\frac{|\hat{\psi}(\gamma)|^{2}}
{|\gamma|}\,d\gamma< +\infty.$$ For $a,b\in \RR$ that $a\neq0$, let
$$\psi^{a,b}(x):= (T_{b}D_{a}\psi)(x)= \frac{1}{|
a|^{\frac{1}{2}}}\psi(\frac{x-b}{a}), \quad ( x\in \RR).$$ Then the
continuous wavelet transform $W_{\psi} $ is
 defined by
$$W_{\psi}(f)(a,b):=\langle
f,\psi^{a,b}\rangle=\int_{-\infty}^{+\infty}f(x)\frac{1}{|
a|^{\frac{1}{2}}}\overline{\psi(\frac{x-b}{a})}\,dx, \quad (f \in
L^2(\RR)). $$
\end{defn}

For an admissible function $\psi$ in $L^2$, the system
$\{\psi^{a,b}\}_{a\neq0, b\in \RR}$ is a continuous tight frame for
$ L^{2}(R )$ with respect to $ \Omega = \RR\setminus\{0\}\times \RR
$ equipped with the measure $\frac{dadb}{a^{2}}$ and for all $f\in
L^{2}(\RR)$
$$f=\frac{1}{C_{\psi}}\int_{-\infty}^{+\infty}\int_{-\infty}^{+\infty}W_{\psi}(f)(a,b)
\psi^{a,b}\,\frac{dadb}{a^{2}},$$ where the integral is understood
in weak sense. This system constitutes a continuous tight frame with
frame bound $\frac{1}{C_{\psi}}$. If $\psi$ is suitably normed so
that $C_{\psi} = 1$, then the frame bound is $1$, i.e. we have a
continuous Parseval frame. For details, see the Proposition 11.1.1
and Corollary 11.1.2 of \cite{C4}.

\begin{defn}\label{D:Def_STFT}
Fix a function $ g\in L^{2}(\RR)\setminus\{0\}$. The
\textit{short-time Fourier transform} (STFT) of a function $f\in
L^{2}(\RR)$ with respect to the window function $g $ is given by
$$
\Psi_{g}(f)(y,\gamma)=\int_{-\infty}^{+\infty}f(x)\overline{g(x-y)}e^{-2\pi
i x\gamma}dx, \quad\quad( y, \gamma \in\RR).$$
\end{defn}
Note that in terms of modulation operators and translation
operators, $\Psi_{g}(f)(y,\gamma)=\langle f,
M_{\gamma}T_{y}g\rangle$.

Let $ g\in L^{2}(\RR)\setminus\{0\}$. Then $\{M_{b}T_{a}g\}_{a,b\in
\RR}$is a continuous frame for $L^{2}(\RR)$ with respect to $ \Omega
=\RR^{2}$ equipped with the Lebesgue measure. Let $f_{1}, f_{2},
g_{1}, g_{2}\in L^{2}(\RR)$. Then
$$\int_{-\infty}^{+\infty}\int_{-\infty}^{+\infty}\Psi_{g_{1}}(f_{1})(a,b)\overline{\Psi_{g_{2}}(f_{2})(a,b)}dbda
=\langle f_{1}, f_{2}\rangle\langle g_{2},g_{1}\rangle.$$ So this
system represent a continuous tight frame with bound $\|g\|^2$. For
details see the proposition 8.1.2 of \cite{C4}.
\par
Another example of continuous frames, called wave packets, can be constructed
by combinations of modulations, translations and dilations to
interpolate the time-frequency properties of analysis of Gabor and
wavelet frames. The interested reader can refer to \cite{ChRa}.
\\
\textbf{Acknowledgments:} The authors would like to thank referee(s) for valuable comments and suggestions.

\end{document}